\newtheorem{thm}{Theorem}
\newtheorem{lemma}{Lemma}
\def\u{{\bf u}}
\def\w{{\bf w}}
\def\span{{\mbox{span}}}
\def\v{{\bf v}}
\def\a{{\alpha}}
\def\b{{\beta}}
\def\c{{\gamma}}
\def\d{{\delta}}
\def\+{{\cup\,}}
\def\f{{\Psi}}
\def\x{{\vartheta}}
\def\Re{{\mathbb{R}}}
\def\dc{{\overline{d}}}
\title{Universal spectra of the disjoint union of regular graphs}
\author{Willem H. Haemers\thanks{e-mail haemers@uvt.nl}
\\
{\it\small Department of Econometrics and Operations Research,}
\\
{\it\small Tilburg University, Tilburg, The Netherlands}
\\[8pt]
Mohammad Reza Oboudi\thanks{e-mail mr\_oboudi@yahoo.com}
\\
{\it\small Department of Mathematics, College of Sciences,}
\\
{\it\small Shiraz University, Shiraz, 71457-44776, Iran}
}
\date{}
\begin{document}
	
\maketitle
\begin{abstract}
A universal adjacency matrix of a graph $G$ with adjacency matrix $A$
is any matrix of the form $U = \a A + \b I + \c J + \d D$ with $\a \neq 0$,
where $I$ is the identity matrix, $J$ is the all-ones matrix and $D$ is the diagonal matrix with the vertex degrees.
In the case that $G$ is the disjoint union of regular graphs,
we present an expression for the characteristic polynomials of the various universal adjacency matrices
in terms of the characteristic polynomials of the adjacency matrices of the components.
As a consequence we obtain a formula for the characteristic polynomial of the Seidel matrix of $G$,
and the signless Laplacian of the complement of $G$ (i.e. the join of regular graphs).
\\[3pt]
{\bf Keywords}: Graph spectrum, characteristic polynomial, universal adjacency matrix, Seidel matrix, Laplacian, signless Laplacian.
\\
{\bf AMS subject classification}: 05C50.
\end{abstract}

\section{Introduction}

Throughout $G$ is a graph of order $n$ with adjacency matrix $A$.
Let $I$, $J$ and $D$ be the identity matrix, the all-ones matrix and the diagonal matrix with the degrees of $G$ on the diagonal,
respectively.
Any matrix of the form $U=\a A+\b I + \c J + \d D$ with $\a,\b,\c,\d\in\Re$, $\a\neq 0$ (as usuall, $\Re$ is the set of real numbers)
is called a {\em universal adjacency matrix} of $G$.
Several well-studied types of matrices associated to $G$ are a special case of a universal matrix.
For example, if $(\alpha,\beta,\c,\delta)=(1,0,0,0)$, $(-1,0,0,1)$, $(1,0,0,1)$, $(-2,-1,1,0)$ we obtain the adjacency,
the Laplacian, the signless Laplacian and the Seidel matrix, respectively,
and for $(\a,\b,\c,\d)=(-1,-1,1,0)$, $(1,n,-1,-1)$, $(-1,n-2,1,-1)$, $(2,1,-1,0)$
we get the matrices of the complement $\overline{G}$ of $G$ of the mentioned types.

If $G$ is regular of degree $d$, then there is an easy relation between the spectra of the various universal adjacency matrices.
Indeed, if $d=\x_1,\ \x_2,\ \ldots,\ \x_n$ are the eigenvalues of $A$, then
$\a d +\b+\c n+\d d,\ \a \x_2+\b+\d d,\ \ldots,\ \a \x_n+\b+\d d$ are the eigenvalues of $U$.
Let us denote the characteristic polynomial of a matrix $M$, $\det(\x I-M)$, by $\f(M,\x)$.
Then the relation between the spectrum of $A$ and the spectrum of $U$ can be formulated as:
\begin{equation}\label{UA}
\frac{\f(U,\a \x+\b+\d d)}{\a\x-\a d - \c n}
= \a^{n-1} \frac{\f(A,\x)}{\x-d}.
\end{equation}
In this note we consider the case that 
$G$ is the disjoint union of $m$ regular graphs $G_1,\ldots,G_m$.
We obtain an expression for the characteristic polynomials of a universal adjacency matrix of $G$
in terms of the characteristic polynomial of the adjacency matrices of $G_1,\ldots,G_m$.
The complement $\overline{G}$ of $G$ is known as the {\em join} of the complements of $G_1,\ldots,G_m$.
Since the class of universal adjacency matrices of a graph $G$ is closed under taking complements,
our result also applies to the join of regular graphs.
If $\c=0$, $U$ is a block diagonal matrix and the result is trivial.
Also some cases with $\c\neq 0$ are known.
For the adjacency matrix of $\overline{G}$ and for the Laplacian of $\overline{G}$ we refer to~\cite{CFMR,S}.
However, we haven't seen a common generalization of these results, and
for the Seidel matrix of $G$ and the signless Laplacian of $\overline{G}$ the formula seems to be new.
%
%
The result follows rather easily from a known lemma on the eigenvalues of an equitable partition of a symmetric matrix.
Part of the motivation for writing this note is to illustrate the use  of this technique.

\section{Equitable partitions}\label{ep}

Consider a symmetric matrix $U$ over $\Re$ with rows and columns indexed by $X=\{1,\ldots,n\}$.
Let $\{X_1,\ldots,X_m\}$ be a partition of $X$ with characteristic vectors $\v_1,\ldots,\v_m$,
and define ${\cal V}=\span\{\v_1,\ldots,\v_m\}$ over $\Re$.
Let $U$ be partitioned accordingly:
\[
U=
\left[
\begin{array}{ccc}
U_{1,1} & \cdots & U_{1,m}\\
\vdots & & \vdots\\
U_{m,1} & \cdots & U_{m,m}
\end{array}
\right].
\]
Assume the partition is {\em equitable} with {\em quotient matrix} $Q$,
which means that each block $U_{i,j}$ has constant row sum equal to $(Q)_{i,j}$ ($1\leq i,j\leq m$).
Then
(see \cite{BH}, Section 2.3):

\begin{lemma}\label{main}
The spectrum of $U$ consists of two types of eigenvalues:\\
Type 1: the eigenvalues with eigenvectors in $\cal V$, which coincide with the eigenvalues of $Q$.\\
Type 2: the eigenvalues with eigenvectors in ${\cal V}^\perp$, the orthogonal complement of $\cal V$;
these eigenvalues coincide with the type~2 eigenvalues of any matrix $U^*$ obtained from $U$ by adding $\c_{i,j}J$ to $U_{i,j}$
for constants $\c_{i,j}$.
\end{lemma}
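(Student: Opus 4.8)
The plan is to exploit that equitability makes ${\cal V}$ invariant under $U$. For the characteristic vector $\v_j$ of $X_j$ and any $k\in X_i$, the $k$-th coordinate of $U\v_j$ is the sum of the entries in row $k$ of the block $U_{i,j}$, which by equitability equals $(Q)_{i,j}$; hence
\[
U\v_j=\sum_{i=1}^m (Q)_{i,j}\,\v_i\in{\cal V},\qquad j=1,\ldots,m,
\]
so $U{\cal V}\subseteq{\cal V}$, and, $U$ being symmetric, also $U{\cal V}^\perp\subseteq{\cal V}^\perp$. Thus $\Re^n={\cal V}\oplus{\cal V}^\perp$ is a decomposition into $U$-invariant subspaces. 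Writing an eigenvector $x$ of $U$ as $x=x_1+x_2$ with $x_1\in{\cal V}$, $x_2\in{\cal V}^\perp$, the eigenvalue equation together with $Ux_1\in{\cal V}$ and $Ux_2\in{\cal V}^\perp$ forces $x_1$ and $x_2$ to be eigenvectors for the same eigenvalue whenever they are nonzero. Hence each eigenvalue of $U$ admits an eigenvector in ${\cal V}$ (Type~1) or in ${\cal V}^\perp$ (Type~2), and the spectrum of $U$ is the union of the spectra of the restrictions $U|_{\cal V}$ and $U|_{{\cal V}^\perp}$.

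For Type~1, the displayed identity shows that in the basis $\v_1,\ldots,\v_m$ the operator $U|_{\cal V}$ is represented by the matrix $Q$, so the Type~1 eigenvalues are precisely the eigenvalues of $Q$ (which are real, since rescaling $\v_i\mapsto\v_i/\sqrt{|X_i|}$ to an orthonormal basis conjugates $Q$ into a symmetric matrix). For Type~2, let $N$ be the matrix whose $(i,j)$ block equals $\c_{i,j}J$, so that $U^*=U+N$; we take $U^*$ symmetric, i.e. $\c_{i,j}=\c_{j,i}$, as is needed for ``Type~2 eigenvalues of $U^*$'' to be meaningful. Each block of $U^*$ again has constant row sum, so the same partition is equitable for $U^*$ and the first paragraph applies to it: the Type~2 eigenvalues of $U^*$ are the eigenvalues of $U^*|_{{\cal V}^\perp}$. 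But if $y\in{\cal V}^\perp$, then $y$ is orthogonal to every $\v_j$, i.e. the coordinates of $y$ on each $X_j$ sum to zero, so each block $\c_{i,j}J$ annihilates the corresponding sub-vector of $y$; hence $Ny=0$ and $U^*|_{{\cal V}^\perp}=U|_{{\cal V}^\perp}$. Therefore the Type~2 eigenvalues of $U$ and of $U^*$ coincide.

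The crux --- and essentially the only non-mechanical step --- is the first one: observing that equitability is exactly the hypothesis that makes ${\cal V}$ an $U$-invariant subspace, so that (by symmetry of $U$) ${\cal V}^\perp$ is invariant as well and the spectrum splits cleanly along ${\cal V}\oplus{\cal V}^\perp$ with no eigenvector straddling the two summands. After that, identifying the matrix of $U|_{\cal V}$ with $Q$ and checking that the zero-sum condition characterizing ${\cal V}^\perp$ kills every within-block perturbation $\c_{i,j}J$ are routine.
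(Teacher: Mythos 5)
Your proof is correct and follows essentially the same route as the paper: the identity $U\v_j=\sum_i (Q)_{i,j}\v_i$ is the paper's relation $UV=VQ$, the symmetry of $U$ gives the splitting along ${\cal V}\oplus{\cal V}^\perp$, and the observation that $\c_{i,j}J$ kills the zero-sum sub-vectors of any $\w\in{\cal V}^\perp$ is exactly what underlies the paper's one-line conclusion $\mu\w=U\w=U^*\w$. You merely spell out a few details the paper leaves implicit (the symmetry requirement $\c_{i,j}=\c_{j,i}$ and the block-wise annihilation), which is fine.
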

\begin{proof}
Consider the matrix $V=[\v_1,\ldots,\v_m]$ (i.e. $(V)_{i,j}=1$ if $i\in X_j$ and $0$ otherwise).
Since the partition is equitable it follows that $UV=VQ$.
If $\lambda$ is an eigenvalue of $Q$ with eigenvector $\u$, then
$UV\u=VQ\u=\lambda V\u$.
Therefore $\lambda$ is an eigenvalue of $U$ with eigenvector $V\u\in{\cal V}$.
Thus we obtain $m$ eigenvalues of $U$ with $m$ independent eigenvectors in ${\cal V}$.
Since $U$ is symmetric, every other eigenvector is in ${\cal V}^\perp$.
Let $\w\in{\cal V}^\perp$ be an eigenvector of $U$ with eigenvalue $\mu$, then $\mu\w = U\w = U^* \w$.
\end{proof}

\section{Application}

Let us apply Lemma~\ref{main} to the universal adjacency matrix $U=\a A+\b I +\c J +\d D$
of $G$ when $G$ is the disjoint union 
of $m$ regular graphs $G_1,\ldots,G_m$.
Suppose $G_i$ has adjacency matrix $A_i$, order $n_i$ and degree $d_i$.
Define $c_i=(\a+\d)d_i+\b$.
Then $U$ has an obvious partition with quotient matrix:
\[
Q=\left[
\begin{array}{cccc}
c_1+\c n_1 & \c n_2 & \cdots & \c n_m\\
\c n_1 & c_2+\c n_2 & \cdots & \c n_m\\
\vdots & \vdots & \ddots & \vdots\\
\c n_1 & \c n_2 & \cdots & c_m+\c n_m
\end{array}
\right].
\]
We easily find the characteristic polynomial $\f(Q,\x)=\det(\x I-Q)$ of $Q$ by Gaussian elimination:
first we subtract the first row of $\x I-Q$ from every other row and then we evaluate with respect to the first row.
Thus we find:
\[
\f(Q,\x) = \left(1-\sum_{i=1}^m \frac{\c n_i}{\x-c_i}\right) \prod_{i=1}^m (\x-c_i).
\]
Define $U^*=\a A +\b I + \d D = U-\c J$, and let $Q^*$ be the quotient matrix of $U^*$.
From Lemma~\ref{main} it follows that $U$ and $U^*$ have the same type~2 eigenvalues.
The type~1 eigenvalues of $Q$ and $Q^*$ are the roots of $\f(Q,\x)$ and $\f(Q^*,\x)$, respectively.
The lemma gives that $\f(Q,\x)$ divides $\f(U,\x)$, that $\f(Q^*,\x)$ divides $\f(U^*,\x)$,
and that
\begin{equation}\label{U/Q}
\frac{\f(U,\x)}{\f(Q,\x)} = \frac{\f(U^*,\x)}{\f(Q^*,\x)}.
\end{equation}
Indeed, the roots of these polynomials are the type~2 eigenvalues of $U$ and $U^*$, respectively.
Clearly $U^*$ is the block diagonal matrix with diagonal blocks $U_i^*=\alpha A_i + (\b + \d d_i) I$ ($i=1,\ldots,m$),
and $Q^*$ is the diagonal matrix with diagonal entries $c_1,\ldots,c_m$.
With the help of Equation~\ref{UA} we get
\[
\f(U^*,\x) = \prod_{i=1}^m \f(U^*_i,\x) = \prod_{i=1}^m \alpha^{n_i} \f(A_i,(\x - \beta - \delta d_i)/\alpha) \mbox{ and }
\f(Q^*,\x) = \prod_{i=1}^m (\x-c_i).
\]
By use of Equation~\ref{U/Q} and the above equations we obtain:

\begin{thm}\label{U}
Suppose $U=\a A+\b I+\c J +\d D$ is a universal adjacency matrix of a graph~$G$,
which is the disjoint union of $m$ regular graphs $G_1,\ldots,G_m$.
Let $G_i$ have order $n_i$, degree $d_i$ and adjacency matrix $A_i$ ($i=1,\ldots,m$).
Then the characteristic polynomial of $U$ is equal~to:
\[
\f(U,\x) = 
\left( 1-\sum_{i=1}^m\frac{\c n_i}{\x-(\a+\d)d_i-\b} \right)
\prod_{i=1}^m \alpha^{n_i} \f(A_i,(\x - \beta - \delta d_i)/\alpha).
\]
\end{thm}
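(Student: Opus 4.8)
The plan is to exhibit an equitable partition of $U$ whose parts are the vertex sets of the components $G_1,\ldots,G_m$, and then to invoke Lemma~\ref{main}. Since $G=G_1\+\cdots\+G_m$, the matrices $A$, $D$, $I$ are all block diagonal with respect to this partition, so the off\nobreakdash-diagonal block $U_{i,j}$ is just $\c J$, with constant row sum $\c n_j$, while the diagonal block is $U_{i,i}=\a A_i+(\b+\d d_i)I+\c J$, with constant row sum $(\a+\d)d_i+\b+\c n_i=c_i+\c n_i$. Hence the partition is equitable with the quotient matrix $Q$ written in the text. I would then compute $\f(Q,\x)$ by subtracting the first row of $\x I-Q$ from all other rows and expanding along the first row, obtaining
\[
\f(Q,\x)=\left(1-\sum_{i=1}^m\frac{\c n_i}{\x-c_i}\right)\prod_{i=1}^m(\x-c_i).
\]

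Next I would introduce the block\nobreakdash-diagonal companion $U^*=U-\c J=\a A+\b I+\d D$, with blocks $U^*_i=\a A_i+(\b+\d d_i)I$, whose quotient matrix $Q^*$ is diagonal with entries $c_1,\ldots,c_m$. The decisive point is that the all\nobreakdash-ones vector lies in ${\cal V}=\span\{\v_1,\ldots,\v_m\}$, so $J$ annihilates ${\cal V}^\perp$ and $U$, $U^*$ agree on ${\cal V}^\perp$; moreover, being symmetric, each of them leaves the orthogonal decomposition $\Re^n={\cal V}\oplus{\cal V}^\perp$ invariant, and on ${\cal V}$ the matrix $U$ is conjugate (via $V=[\v_1,\ldots,\v_m]$) to $Q$, while $U^*$ is conjugate to $Q^*$. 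This is precisely the content of Lemma~\ref{main}, and it yields factorisations $\f(U,\x)=\f(Q,\x)\,p(\x)$ and $\f(U^*,\x)=\f(Q^*,\x)\,p(\x)$ with the \emph{same} polynomial $p(\x)=\det(\x I-U|_{{\cal V}^\perp})$, i.e.\ Equation~\ref{U/Q}.

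It then remains to evaluate the block\nobreakdash-diagonal quantities. From $\x I-U^*_i=\a\bigl(\frac{\x-\b-\d d_i}{\a}\,I-A_i\bigr)$ (or directly from Equation~\ref{UA} applied to $G_i$ with $\c=0$) one gets $\f(U^*_i,\x)=\a^{n_i}\f(A_i,(\x-\b-\d d_i)/\a)$, hence $\f(U^*,\x)=\prod_i\a^{n_i}\f(A_i,(\x-\b-\d d_i)/\a)$ and $\f(Q^*,\x)=\prod_i(\x-c_i)$. Substituting these together with the formula for $\f(Q,\x)$ into $\f(U,\x)=\f(Q,\x)\cdot\f(U^*,\x)/\f(Q^*,\x)$, the common factor $\prod_i(\x-c_i)$ cancels, and replacing $c_i$ by $(\a+\d)d_i+\b$ gives the asserted identity.

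The step requiring the most care is the joint factorisation with a \emph{common} $p(\x)$: one needs that the type~2 eigenvalues of $U$ and $U^*$ coincide with multiplicity, which is why I would argue through the invariant subspace ${\cal V}^\perp$ (using symmetry of $U$, hence of every matrix obtained from $U$ by adding multiples of $J$ to its blocks) rather than by a naive count of eigenvalues. A small auxiliary remark worth recording is that the right\nobreakdash-hand side of the theorem genuinely is a polynomial, since at $\x=c_i$ the argument of $\f(A_i,\cdot)$ equals $d_i$, an eigenvalue of the $d_i$\nobreakdash-regular graph $G_i$, so each apparent pole $\c n_i/(\x-c_i)$ is cancelled by a zero of $\prod_i\a^{n_i}\f(A_i,(\x-\b-\d d_i)/\a)$.
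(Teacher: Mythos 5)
Your proposal is correct and follows essentially the same route as the paper: the equitable partition by components, the quotient matrix $Q$ and its characteristic polynomial via row reduction, the comparison with $U^*=U-\c J$ through Lemma~\ref{main} to get Equation~\ref{U/Q}, and the explicit evaluation of $\f(U^*,\x)$ and $\f(Q^*,\x)$. Your extra care about the common factor $p(\x)$ holding with multiplicity (via the invariant decomposition $\Re^n={\cal V}\oplus{\cal V}^\perp$) and the check that the apparent poles at $\x=c_i$ cancel are both accurate refinements of what the paper leaves implicit.
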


\section{Examples}

We consider two corollaries of this theorem, which we believe to be new.
If $(\a,\b,\c,\d)=(-2,-1,1,0)$ then $U$ is the Seidel matrix $S$ of $G$, and thus we have
the following formula
\[
\f(S,\x) =
\left( 1-\sum_{i=1}^m\frac{n_i}{\x+2d_i+1} \right)
\prod_{i=1}^m (-2)^{n_i} \f(A_i,(\x + 1)/{-2}).
\]
In terms the characteristic polynomials of the Seidel matrices $S_i$ of $G_i$ this becomes:
\[
\f(S,\x) =
\left( 1-\sum_{i=1}^m\frac{n_i}{\x+2d_i+1} \right)
\prod_{i=1}^m \frac{\x+2d_i+1}{\x-n_i+2d_i+1}\f(S_i,\x).
\]
For the second example we recall that $\overline{G}$ is the join of $\overline{G}_1,\ldots,\overline{G}_m$.
Let $T$ and $T_1,\ldots,T_m$ be the signless Laplacians of $\overline{G}$ and
$\overline{G}_1,\ldots,\overline{G}_m$, respectively.
Then $T$ is a universal adjacency matrix of $G$ with $(\a,\b,\c,\d)=(-1,n-2,1,-1)$.
Theorem~\ref{main} gives:
\[
\f(T,\x) =
\left( 1-\sum_{i=1}^m\frac{n_i}{\x-n+2d_i+2} \right)
(-1)^n \prod_{i=1}^m \f(A_i,-\x+n-d_i-2).
\]
By use of Equation~\ref{UA} we find an expression for $\f(T,\x)$ in terms of the spectra of $T_i$:
\[
\f(T,\x) = \left( 1-\sum_{i=1}^m\frac{n_i}{\x-n+2n_i-2\dc_i} \right)
\prod_{i=1}^m \frac{\x-n+2n_i-2\dc_i}{\x-n+n_i-2\dc_i} \f(T_i,\x-n+n_i),
\]
where $\dc_i=n_i-d_i-1$ is the degree of $\overline{G}_i$.
If $m=2$ this leads to a known formula for the signless Laplacian of the join of two regular graphs (see~\cite{LL}, Corollary~2.4):
\[
\f(T,\x) = \left(1-\frac{n_1 n_2}{(\x-n_1-2\dc_2)(\x-n_2-2\dc_1)} \right)\f(T_1,\x-n_2)\f(T_2,\x-n_1).
\]

\section{Concluding remark}
Although the spectrum of every possible universal adjacency matrix of the disjoint union of regular graphs follows from Theorem~\ref{U},
it still may be a bit tricky to work it out in a specific case.
Sometimes it is easier to work directly from Lemma~\ref{main}.
For example when $S = -2A+J-I$ is the Seidel matrix of a complete multipartite graph.
Then all type~2 eigenvalues of $S$ are equal to $-1$
(indeed, we obtain $-I$ if we subtract $J$ from the diagonal blocks and add $J$ to the other blocks).
The type~1 eigenvalues of $S$ are the eigenvalues of the quotient matrix which has characteristic polynomial
$
\left(1+\sum_{i=1}^{m} n_i/(\x-2n_i+1)\right)\prod_{i=1}^m (\x-2n_i+1)
$
This result has been obtained in~\cite{WZL} by a different approach.

\end{document}